\providecommand{\U}[1]{\protect\rule{.1in}{.1in}}
\newtheorem{theorem}{Theorem}
\newtheorem{corollary}[theorem]{Corollary}
\newtheorem{definition}[theorem]{Definition}
\newtheorem{lemma}[theorem]{Lemma}
\newtheorem{proposition}[theorem]{Proposition}
\newtheorem{remark}[theorem]{Remark}
\newenvironment{proof}[1][Proof]{\textbf{#1.} }{\ \rule{0.5em}{0.5em}}
\begin{document}

\title{Putting all eggs in one basket: some insights from a correlation inequality}
\author{Pradeep Dubey$^{1,4}$, Siddhartha Sahi$^{2,1}$, and Guanyang Wang$^{3}$\\
\\1. Center for Game Theory, Department of Economics \\Stony Brook University, Stony Brook, NY 11790\\2. Department of Mathematics, Rutgers University, \\3. Department of Statistics, Rutgers University, \\New Brunswick, NJ 08903\\4. Cowles Foundation, Yale University, New Haven, CT 06520}
\date{}
\maketitle

\section*{ Abstract}

We give examples of situations{ (stochastic production, military
tactics, corporate merger)} where it is beneficial to concentrate risk rather
than to diversify it, \textit{i.e.}, to put all eggs in one basket. Our
examples admit a dual interpretation: as optimal strategies of a single player
(the \textquotedblleft principal\textquotedblright) or, alternatively, as
dominant strategies in a non-cooperative game with multiple players (the
\textquotedblleft agents\textquotedblright).

The key mathematical result can be formulated in terms of a convolution
structure on the set of increasing functions on a Boolean lattice (the lattice
of subsets of a finite set). This generalizes the well-known Harris inequality
from statistical physics and discrete mathematics; we give a simple
self-contained proof of this result, and prove a \emph{further} generalization
based on the game-theoretic approach.

\bigskip

\textbf{Keywords: }Harris inequality, correlation inequality, increasing
functions, diversifying risk, concentrating risk, stochastic production,
military tactics, corporate merger, dominant strategy. \medskip

\textbf{Mathematics Classification -- MSC2020}: 60E15, 91A10, 91A18, 91B43.

\textbf{Economics Classification -- JEL}: C60, C61, C72, G10.

\section*{Introduction}

\begin{quotation}
{\small \textquotedblleft It is the part of a wise man to keep himself today
for tomorrow, and not to venture all his eggs in one basket." ---} {\small M.
Cervantes (Don Quixote, 1605)}

{\small \textquotedblleft Behold, the fool saith, \textquotedblleft Put not
all thine eggs in the one basket\textquotedblright\ --- which is but a matter
of saying, \textquotedblleft Scatter your money and your
attention\textquotedblright; but the wise man saith, \textquotedblleft Put all
your eggs in the one basket and -- WATCH THAT BASKET.\textquotedblright\ ---
\ M. Twain (Pudd'nhead Wilson, 1894) }
\end{quotation}

Although modern economic theory and practice widely advocate for risk
diversification --- epitomized by the maxim of distributing 'eggs' across
various 'baskets' and embraced alike by academic scholars and by financial
 {practitioners} such as bankers, investors, and portfolio managers ---
there are  {scenarios} where it may be beneficial to concentrate risk.
This is especially true when the reward
from the joint success of all ventures can eclipse the combined gains
from several partial successes. Our analysis aims to
delineate conditions under which it is best to concentrate all resources into
one endeavor, {\it i.e.} to put all eggs in one basket.

In the realm of portfolio theory, the typical payoff function is the
\emph{sum} of returns from various investments, and then it is beneficial to
create strategic diversification by bundling together asset classes that
exhibit no correlation or negative correlation. In contrast, our study
explores scenarios where the payoff is the \emph{product} of exogenously given
functions, and is influenced by strategic choices that affect their joint
distribution. The pivotal insight is that positive correlation among these
functions could make it strategically beneficial to adopt a coordinated
approach, which effectively concentrates risk and increases the probability
that the peaks and troughs of the individual functions occur together.

We present a series of examples demonstrating that such scenarios can emerge
quite naturally in a strategic setting. Our examples span a variety of
scenarios, including stochastic production with unpredictable input
supplies, military tactics aimed at disrupting enemy communication networks,
and the decision-making processes in corporate mergers. Each scenario can be
interpreted in two ways: either as an optimal strategy for an individual actor
(the 'principal') or as dominant strategies in a non-cooperative setting
involving several participants (the 'agents').

In every scenario presented, the principal's potential strategies are
subsets $S$ of a finite set $H$. The case where $S=\emptyset$ represents the
strategy of maximum risk diversification, while $S=H$ signifies the strategy
of maximum risk concentration. We demonstrate that the payoff
$\Pi\left(  S\right)  $ is an increasing function, meaning that if $S$
contains $T$ then $\Pi(S)\geq\Pi(T)$. Consequently, the optimal strategy is to
choose $S=H$, effectively \textquotedblleft putting all eggs in one
basket\textquotedblright.

While the initial examples are binary in nature, Section \ref{sec:many}
expands the discussion to a more complex scenario of stochastic production
with multiple inputs. Here the principal's strategies correspond to
partitions of the input set. We establish in Proposition
\ref{thm: dominant} that the optimal strategy involves opting for the coarsest
partition. To this end, in Section \ref{GTperspective} we recast the
problem as a strategic game among the agents. Proposition \ref{thm: dominant}
 in fact asserts that selecting the coarsest partition is a dominant
strategy for each agent, not just in the standard ex ante sense, but in a much
stronger ex post sense---see Remark \ref{ex post optimality} for the precise statement.

Proposition \ref{thm: dominant} has a further implication. In the
scenario of stochastic production, as detailed in Section
\ref{sec:many}, it may well happen that the principal does not have the
possibility of directly executing her optimal
strategy. Instead, she must rely upon each of her autonomous agents to
\textquotedblleft fall in line\textquotedblright, i.e., to voluntarily choose
to implement his component of her optimal strategy. Remark
\ref{incentive compatibility} illustrates that by allocating a modest share of
her payoff to each agent, the principal can incentivize them to do
precisely this, so that her optimal strategy becomes effectively
\textquotedblleft self-enforcing\textquotedblright.

The unifying mathematical principle for the first three examples is as
follows: Let $\mathcal{H}$ be the power set of a finite set $H$, and let $A$
be the space of real-valued functions on $\mathcal{H}$. In (\ref{=ES}) below,
we define a convolution operation $f \star g$ on $A$, predicated on a 'coin
tossing' mechanism with probabilities $0 \leq p_{h} \leq1$ for each element $h
\in H$. Theorem \ref{th:main} establishes that the convolution of two
increasing functions results in another increasing function. Now it turns out
that in each example, the payoff function $\Pi$ can be expressed as the
convolution of two increasing functions, confirming that $\Pi$ itself is an
increasing function.

We point out that Theorem \ref{th:main} belongs to a class of correlation
inequalities that are widely studied and applied in combinatorics, graph
theory, and statistical physics. In particular it readily implies the
well-known Harris inequality \cite{Harris}, which is a pivotal concept in
percolation theory and the Erdos-Renyi model of random graphs
\cite{Alon-Spencer}. We hope that our paper will serve to introduce the
beautiful subject of correlation inequalities to those previously unacquainted
with the topic.

The Harris inequality and its extensions, such as the
Fortuin-Kasteleyn-Ginibre inequality \cite{Fortuin} and the Ahlswede-Daykin
four-function inequaliy \cite{AD}, have been applied in economic theory to
several areas including comparative statics \cite{Athey, Quah}, bargaining
networks \cite{Azar}, and optimal assignments \cite{Nikzad}. However their
role in risk concentration strategies, which is the central theme of our
paper, remains unexplored. While the paper \cite{Postlewaite} does discuss
risk concentration, it is underpinned by a different set of principles, and
does not employ correlation inequalities.

\section*{Acknowledgement}

The authors thank Ioannis Karatzas, Larry Samuelson, and Eran Shmaya for their
helpful and insightful comments on an earlier version of this paper. The
research of S. Sahi was partially supported by NSF grant DMS-2001537.

\section{Example 1: Stochastic production with two inputs}

An entrepreneur has a Cobb-Douglas (log-linear) production function of the
form
\begin{equation}
\label{=CD}f(x,y)=x^{\alpha}y^{\beta},\quad\alpha,\beta>0,
\end{equation}
involving two inputs $x, y$, which she sources from a finite set $H$ of suppliers.

Each supplier $h\in H$ has $x_{h}$ units of the $x$-input and $y_{h}$ units of
the $y$-input which he needs to ship to the entrepreneur, either separately by
way of two independent shipments, or together in one \textquotedblleft
pooled\textquotedblright\ shipment.
%\footnote{Many of the suppliers may be specialized in just one input, but so long as there are some that have both, our example comes to life.}.
The shipments have zero dollar cost (see, however, Remark \ref{costs}) but are
fraught with risk: the probability that a shipment by $h$ will reach its
destination is $p_{h}.$ These probabilities are independent across the
suppliers and also across different shipments by the same supplier.

The entrepreneur can make a \emph{separating contract} with supplier $h$ for
him to ship $x_{h}$ and $y_{h}$ independently; or a \emph{pooling}
\emph{contract }for him to ship them jointly. She has to give all her
suppliers sufficient advance notice and make these contracts \textit{ex ante},
prior to the realisation of any of their deliveries, \textit{i.e.,} the luxury
of telling a supplier $h$ what to do, conditional on the deliveries obtained
from other suppliers, is not available to her. Thus her possible strategies
are indexed by subsets $S\subseteq H$, where the $S$\emph{-strategy} consists
of making a pooling contract with each supplier in $S$, and a separating
contract with all the other suppliers.

%Note, in passing, that if she has contracted prices $\pi_{h},\sigma_{h}$ for $x_{h},y_{h}$ with each $h\in H,$ then her expected total payment to the suppliers remains the same across all her $S$-strategies. Thus, being risk-neutral, she is indifferent between her strategies \emph{from the viewpoint of her expenditures.} However,\emph{ }this is\emph{ not }the case \emph{from the viewpoint of her income }(i.e., sales revenue) as we shall just see.

%We further assume that there is a perfectly competitive market for the output, where any amount of it can be sold at an exogenously fixed price.

\bigskip

\emph{Which strategy $S\subseteq H$ will maximize the entrepreneur's expected
output? }

\bigskip

There are clear-cut advantages of diversifying risk by means of separating
contracts. For suppose that many shipments fail under pooling, so that the
total $x$ and $y$ received by the entrepreneur are both small. Had she opted
for separating contracts, there would have remained a chance of less failure
of the $y$-shipments despite the widespread failure of the $x$-shipments,
enabling the production of medium output. Under pooling, $x$ and $y$ are
inexorably linked: if $x$ is small, so is $y,$ and therefore so is the output.
Why should the entrepreneur not diversify risk, instead of putting all the
eggs (inputs of $h$) in the same basket (shipment of $h$)? However we show
that she should do just that.

Indeed, consider the special case of a single supplier, with $H=\{1\}$. Then
the expected output is $px_{1}^{\alpha}y_{1}^{\beta}$, where $p$ is the
probability that the entrepreneur receives \emph{both} $x$ and $y$ from the
supplier. This probability is $p_{1}$ under pooling, and $p_{1}^{2}\leq p_{1}$
under separate shipments. Thus pooling leads to higher expected output.

The advantage of pooling persists with many suppliers. In fact, if $\Pi
_{1}\left(  S\right)  $ is the expected output under the $S$-strategy, then
one has the following stronger result.

\begin{proposition}
\label{Production Claim} The expected output $\Pi_{1}(S)$ is an increasing
function of $S$. In particular, the $H$-strategy maximizes the expected output.
\end{proposition}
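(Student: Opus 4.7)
My plan is to express the expected output $\Pi_{1}$ as the convolution $f\star g$ of two manifestly increasing functions on the Boolean lattice $\mathcal{H}$, and then invoke Theorem \ref{th:main}. To set up a clean probability space, for each supplier $h\in H$ I would introduce Bernoulli indicators $\xi_{h}^{x},\xi_{h}^{y}\in\{0,1\}$ for the successful arrival of his $x$- and $y$-shipments. Under either contract each indicator has marginal probability $p_{h}$; the pooling contract forces $\xi_{h}^{x}=\xi_{h}^{y}$, whereas the separating contract makes them independent, and all variables across distinct suppliers are independent. Writing $R_{x}=\{h:\xi_{h}^{x}=1\}$ and $R_{y}=\{h:\xi_{h}^{y}=1\}$ for the random success sets, the received amounts are $X=\sum_{h\in R_{x}}x_{h}$ and $Y=\sum_{h\in R_{y}}y_{h}$, so
\begin{equation*}
\Pi_{1}(S)\;=\;E\bigl[X^{\alpha}Y^{\beta}\bigr]\;=\;E\bigl[f(R_{x})\,g(R_{y})\bigr],
\end{equation*}
where $f(T):=\bigl(\sum_{h\in T}x_{h}\bigr)^{\alpha}$ and $g(T):=\bigl(\sum_{h\in T}y_{h}\bigr)^{\beta}$. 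Both $f$ and $g$ are manifestly increasing on $\mathcal{H}$, since enlarging $T$ weakly enlarges each inner sum and $\alpha,\beta>0$.

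Next I would recognize the joint law of $(R_{x},R_{y})$ induced by the $S$-strategy --- perfectly coupled on $S$, independent on $H\setminus S$ --- as exactly the coin-tossing coupling underlying the convolution $\star$ defined in (\ref{=ES}). Once this identification gives $\Pi_{1}(S)=(f\star g)(S)$, Theorem \ref{th:main} yields immediately that $\Pi_{1}$ is an increasing function on $\mathcal{H}$; in particular $\Pi_{1}(H)\geq\Pi_{1}(S)$ for every $S\subseteq H$, establishing optimality of the $H$-strategy.

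I expect the main obstacle to be precisely this identification step: one must verify that the probabilistic description of $f\star g$ from (\ref{=ES}) really matches the coupling in which $\xi_{h}^{x}=\xi_{h}^{y}$ for $h\in S$ and $\xi_{h}^{x},\xi_{h}^{y}$ are independent for $h\notin S$. The monotonicity of $f,g$ is transparent and all else reduces to bookkeeping. As a sanity check on the single-step increment from $S$ to $S\cup\{h_{0}\}$, conditioning on the outcomes at the remaining suppliers yields constants $x_{0},y_{0}$, and the required inequality collapses to $p_{h_{0}}(1-p_{h_{0}})(a-b)(c-d)\geq 0$ with $a=(x_{0}+x_{h_{0}})^{\alpha}\geq b=x_{0}^{\alpha}$ and $c=(y_{0}+y_{h_{0}})^{\beta}\geq d=y_{0}^{\beta}$, confirming monotonicity in the one-element case and suggesting no hidden subtlety beyond matching the two probabilistic descriptions.
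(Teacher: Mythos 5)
Your proposal is correct and follows essentially the same route as the paper: the paper's proof simply observes that $\Pi_{1}=F_{1}\star F_{2}$ for the increasing functions $F_{1}(T)=\bigl(\sum_{h\in T}x_{h}\bigr)^{\alpha}$ and $F_{2}(T)=\bigl(\sum_{h\in T}y_{h}\bigr)^{\beta}$ and invokes Theorem \ref{th:main}. You supply more of the bookkeeping (the explicit coupling of the success indicators and its identification with $\mu_{S}$) than the paper's one-line argument, but the substance is identical.
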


\begin{proof}
See Section \ref{sec: Production Claim}.
\end{proof}

%\begin{claim} \label{Production Claim2} If $a(x)$ and $b(y)$ are increasing functions then Claim \ref{Production Claim} and Corollary \ref{Production Corollary} continue to hold for the production function $a(x)b(y)$. \end{claim}

%This implies  Claim \ref{Production Claim} and Corollary \ref{Production Corollary} by specializing  $a(x)=x^\alpha$ and $b(y)=y^\beta$.

%\begin{remark} The Cobb-Douglas function \eqref{=CD} is a special case of \eqref{=ab} with $F_{1}(S_{1})=(\sum_{h\in S_{1}}x_{h})^{\alpha}, \; F_{2}(S_{2})=(\sum_{h\in S_{2}}y_{h})^{\beta}$. Clearly $F_1,F_2$ are increasing in $S_{1},S_{2}$. \end{remark}

\section{Example 2: Military tactics}

Country I has two communication networks, $R$ (red) and $B$ (blue). Each
network is characterized by a pair $\left(  H_{\alpha},\mathcal{C}_{\alpha
}\right)  ,$ $\alpha\in\left\{  R,B\right\}  $. Here $H_{\alpha}$ is a finite
set of sites across the country at which there exist hubs of network $\alpha;$
and $\mathcal{C}_{\alpha}$ is a collection of \emph{critical subsets }of
$H_{\alpha}$, so called because the destruction of all the hubs of $\alpha$ in
$S\in\mathcal{C}_{\alpha}$ will disable network $\alpha$. It is natural to
assume, and we will, that the $\mathcal{C}_{\alpha}$ are \emph{increasing }in
the sense\emph{ }%
\begin{equation}
S\in\mathcal{C}_{\alpha}\text{ \& }S\subset T\implies T\in\mathcal{C}_{\alpha
}. \label{=monotonic collection}%
\end{equation}
Country II needs to disable \emph{both} networks. It knows only the set
$H=H_{R}\cup H_{B}$ of all sites and has no information regarding the pairs
$\left(  H_{\alpha},\mathcal{C}_{\alpha}\right)  $ other than that they exist.
For each $h\in H$ it has weapons, $r_{h}$ and $b_{h}$, which can destroy a red
hub or a blue hub respectively at site $h$, provided the hub exists
there.\footnote{e.g., the hubs of $R$ (or, $B$) are above (or, below) ground;
and $r_{h}$ detonates above, while $b_{h}$ burrows into the earth and
detonates below.} It also possesses several \textquotedblleft$h$%
-missiles\textquotedblright\ which are trained to fire $r_{h}$ and $b_{h}$ at
$h\in H;$ and each $h$-missile can carry either $r_{h}$ or $b_{h}$ or both,
but the probability that it will hit $h$ is $p_{h}$, independent of the
outcome of other missiles. Moreover, the missiles must be fired rapidly before
it can be known which ones hit their targets. The military is not concerned
with the costs of the weapons or of the missiles\footnote{However, see Remark
\ref{costs}.}. It is pondering over its $2^{\left\vert H\right\vert }%
$\ strategies, one for every $S\subset H,$ where the $S$-strategy consists of
firing the weapons jointly at the sites in $S$ and separately at those in
$H\diagdown S$.

\bigskip

\emph{Which strategy should the military employ? And would it be of benefit
for country II to conduct espionage to find out }$\left(  H_{\alpha
},\mathcal{C}_{\alpha}\right)  $\emph{ in order to fine-tune its strategy
based on that information?}

\bigskip

As in the previous example, there seem to be some advantages to firing
separately. For suppose the weapons are fired jointly at all $h$, and it turns
out that the set $S$ of targets hit is in $\mathcal{C}_{R}$ but not in
$\mathcal{C}_{B}$, with the upshot that Country II fails in its objective.
Perhaps its prospects might have been better had it fired separately.
Conditional on the realisation of $S$, there would still have remained a
positive probability of hitting a critical set $T$ in $\mathcal{C}_{B}$.
\medskip

Nevertheless, if $\Pi_{2}\left(  S\right)  $ denotes the probability that both
networks get disabled under the $S$-strategy then we prove the following.

\begin{proposition}
\label{Military Claim} The probability $\Pi_{2}\left(  S\right)  $ is an
increasing function of $S$. In particular, the $H$-strategy maximizes the
probability of disabling both networks.
\end{proposition}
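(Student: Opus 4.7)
The plan is to realize $\Pi_{2}$ as a convolution $\tilde f \star \tilde g$ of two increasing functions on the Boolean lattice $\mathcal{H} = 2^{H}$, and then invoke Theorem \ref{th:main} to conclude that $\Pi_{2}$ is increasing in $S$.

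To set up the two functions, I would define $\tilde f, \tilde g : 2^{H} \to \{0,1\}$ by $\tilde f(A) = \mathbf{1}[A \cap H_{R} \in \mathcal{C}_{R}]$ and $\tilde g(B) = \mathbf{1}[B \cap H_{B} \in \mathcal{C}_{B}]$. These are the natural lifts of the indicator functions of $\mathcal{C}_{R}$ and $\mathcal{C}_{B}$ from $2^{H_{R}}$ and $2^{H_{B}}$ respectively to the full power set $2^{H}$. The monotonicity hypothesis (\ref{=monotonic collection}) immediately implies that $\tilde f$ and $\tilde g$ are increasing on $\mathcal{H}$: if $A \subseteq A'$ then $A \cap H_{R} \subseteq A' \cap H_{R}$, and the upward closure of $\mathcal{C}_{R}$ yields $\tilde f(A) \leq \tilde f(A')$, with the analogous statement for $\tilde g$.

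The next step is to match the probabilistic mechanism of the $S$-strategy to the convolution structure underlying Theorem \ref{th:main}. For each site $h \in S$ a single $h$-missile carries both weapons, so one Bernoulli$(p_{h})$ coin flip decides whether $r_{h}$ and $b_{h}$ are simultaneously delivered to $h$; for $h \notin S$ the two weapons ride on separate $h$-missiles, so two independent Bernoulli$(p_{h})$ flips are needed. Thus the random set $A$ of sites at which $r_{h}$ arrives and the random set $B$ of sites at which $b_{h}$ arrives are precisely the pair of correlated subsets of $H$ produced by the coin-tossing mechanism of (\ref{=ES}), with $S$ playing the role of the ``pooled'' coordinates. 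The event that Country II disables both networks is exactly $\{A \cap H_{R} \in \mathcal{C}_{R}\} \cap \{B \cap H_{B} \in \mathcal{C}_{B}\}$, so $\Pi_{2}(S) = \mathbb{E}[\tilde f(A)\, \tilde g(B)] = (\tilde f \star \tilde g)(S)$. Applying Theorem \ref{th:main} to the increasing functions $\tilde f$ and $\tilde g$ then yields that $\Pi_{2}$ is increasing in $S$, and specializing to $S = H$ gives the second assertion.

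The main obstacle I anticipate is the bookkeeping in the second step: one has to verify that the joint distribution of $(A,B)$ implicit in the $S$-strategy coincides exactly with the distribution prescribed by (\ref{=ES}), uniformly over all sites $h$ regardless of whether $h$ lies in $H_{R}$, in $H_{B}$, in both, or in neither; in particular, sites outside $H_{R} \cup H_{B}$ contribute trivially and must be handled so that they do not spoil the matching. Once this identification is clear, the proof reduces to a direct application of Theorem \ref{th:main} and requires no further inequality.
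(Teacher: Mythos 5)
Your proposal is correct and follows essentially the same route as the paper: identify $\Pi_{2}$ as the convolution of the (lifted) indicator functions of $\mathcal{C}_{R}$ and $\mathcal{C}_{B}$, which are increasing by (\ref{=monotonic collection}), and apply Theorem \ref{th:main}. Your explicit treatment of the lift from $2^{H_{\alpha}}$ to $2^{H}$ and of the matching of the coin-tossing mechanism is just a more careful spelling-out of what the paper leaves implicit.
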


\begin{proof}
See Section \ref{sec: Military Claim}.
\end{proof}

\begin{remark}
Note that this result holds no matter what $\left(  H_{R},\mathcal{C}%
_{R}\right)  $ and $\left(  H_{B},\mathcal{C}_{B}\right)  $ are. Thus
espionage is of no benefit. If the military \emph{were }concerned about the
costs of $r_{h},b_{h}$ then of course it would be of benefit to know
$H_{R},H_{B}$ in order to avoid firing $r_{h},b_{h}$ at $H\diagdown H_{R},$
$H\diagdown H_{B}$ respectively.
\end{remark}

\smallskip

It turns out that firing jointly increases not just the probability of
disabling both networks, but \emph{also }the probability of disabling neither.
What decreases is the probability of disabling exactly one network. Here is
the precise statement.

\begin{proposition}
\label{Military Claim 2} Let $F\left(  S\right)  $ and $G\left(  S\right)  $
be the probabilities of disabling neither network, or exactly one network,
under the $S$-strategy; then $F$ is an increasing function and $G$ is a
decreasing function of $S$.
\end{proposition}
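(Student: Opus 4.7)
The plan is to show that $F$ is increasing via another application of Theorem \ref{th:main}, and then derive the monotonicity of $G$ from the identity $\Pi_{2}+F+G\equiv 1$ together with Proposition \ref{Military Claim}.

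First, I would recall the probabilistic setup used to analyze $\Pi_{2}$. Under the $S$-strategy, let $U\subseteq H$ be the random set of sites at which the $r$-weapon lands, and $V\subseteq H$ the analogous set for the $b$-weapon. By construction: for each $h\in S$ the events $\{h\in U\}$ and $\{h\in V\}$ coincide and have probability $p_{h}$; for each $h\notin S$ they are independent, each of probability $p_{h}$; and distinct sites are mutually independent. This is exactly the coin-tossing mechanism that underlies the convolution (\ref{=ES}), so that with $\phi_{\alpha}(T):=\mathbf{1}[T\cap H_{\alpha}\in\mathcal{C}_{\alpha}]$ one has $\Pi_{2}(S)=(\phi_{R}\star\phi_{B})(S)$.

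Second, to treat $F$, I would pass to the complementary ``miss'' sets $U^{c}=H\setminus U$ and $V^{c}=H\setminus V$. Their joint law has precisely the same structural form as that of $(U,V)$, except that the hit probability $p_{h}$ is replaced by the miss probability $q_{h}:=1-p_{h}$. Network $\alpha$ fails to be disabled iff $U\cap H_{\alpha}\notin\mathcal{C}_{\alpha}$, equivalently iff $H_{\alpha}\setminus U^{c}\notin\mathcal{C}_{\alpha}$. This suggests defining
\[
\psi_{\alpha}(T):=\mathbf{1}[H_{\alpha}\setminus T\notin\mathcal{C}_{\alpha}],\qquad \alpha\in\{R,B\}.
\]
A brief check using (\ref{=monotonic collection}) confirms that each $\psi_{\alpha}$ is increasing in $T$: if $T\subseteq T'$ then $H_{\alpha}\setminus T'\subseteq H_{\alpha}\setminus T$, so $H_{\alpha}\setminus T'\in\mathcal{C}_{\alpha}$ would force $H_{\alpha}\setminus T\in\mathcal{C}_{\alpha}$ by monotonicity, contrary to $\psi_{\alpha}(T)=1$. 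Consequently,
\[
F(S)=\mathbb{E}\bigl[\psi_{R}(U^{c})\,\psi_{B}(V^{c})\bigr]=(\psi_{R}\star'\psi_{B})(S),
\]
where $\star'$ denotes the convolution (\ref{=ES}) with the probabilities $q_{h}$ in place of $p_{h}$. Applying Theorem \ref{th:main} to $\star'$ yields that $F$ is increasing on $\mathcal{H}$.

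Finally, since $\Pi_{2}(S)+F(S)+G(S)=1$ identically, and since both $\Pi_{2}$ (by Proposition \ref{Military Claim}) and $F$ (just shown) are increasing, it follows that $G=1-\Pi_{2}-F$ is decreasing, completing the proof. The main obstacle is conceptual rather than technical: one has to recognize that ``disabling neither network'' is governed by the \emph{complementary} coin-tossing model with probabilities $q_{h}=1-p_{h}$, and must choose the dual indicators $\psi_{\alpha}$ appropriately so that Theorem \ref{th:main} becomes applicable; once this reformulation is in place the argument is essentially a one-line reduction.
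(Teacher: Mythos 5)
Your proof is correct, but the way you establish that $F$ is increasing is genuinely different from the paper's. The paper writes $F=(1-f)\star(1-g)$, where $f,g$ are the indicators of $\mathcal{C}_{R},\mathcal{C}_{B}$ from the proof of Proposition \ref{Military Claim}, and then uses the bilinearity of the convolution (\ref{=ES}) to note that $(1-f)\star(1-g)=(f-1)\star(g-1)$; since $f-1$ and $g-1$ are increasing, Theorem \ref{th:main} applies directly, with the \emph{same} probabilities $p_{h}$. This one-line trick works because Theorem \ref{th:main} nowhere requires the functions to be non-negative (the proof of Lemma \ref{lem-key} only uses $a^{\prime}\geq a$ and $b^{\prime}\geq b$). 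You instead keep all functions non-negative but change the underlying model: you pass to the complementary miss-sets $U^{c},V^{c}$, observe that their joint law is the convolution measure built from $q_{h}=1-p_{h}$, and apply Theorem \ref{th:main} to the dual increasing indicators $\psi_{\alpha}(T)=\mathbf{1}[H_{\alpha}\setminus T\notin\mathcal{C}_{\alpha}]$, whose monotonicity you correctly verify from (\ref{=monotonic collection}). Your duality argument is the standard one in percolation theory and has the virtue of working even in settings where a correlation inequality is only available for non-negative functions; its cost is the extra check that the complementary process is again of the required coin-tossing form, which the paper's algebraic identity sidesteps entirely. The derivation of the monotonicity of $G$ from $G=1-\Pi_{2}-F$ is identical in both arguments.
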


\begin{proof}
See Section \ref{sec: Military Claim 2}.
\end{proof}

\section{Example 3: Corporate merger}
 Each individual $h\in H$ owns shares in company $A$ and/or $B$, and there are no shareholders outside of $H$. The \emph{voting game} (aka \textquotedblleft simple game\textquotedblright, see \cite{Shapley}%
)\ in each company is described by a function
\[
f_{\alpha}:\mathcal{H\longrightarrow}\left\{  0,1\right\}, \quad \alpha= A,B
\]
where $\mathcal{H}$ denotes the collection of all coalitions (subsets) of $H$.
The interpretation is that coalition $S\subset H$ is \emph{winning} (resp.,
\emph{losing}) \emph{in company }$\alpha$ if $f_{\alpha}\left(  S\right)  =1$
(resp., $f_{\alpha}\left(  S\right)  =0$). We naturally assume that
$f_{\alpha}$ is\emph{ increasing,} i.e., adding voters to a winning coalition
cannot make it losing; and (to avoid trivialities) that the empty coalition
$\emptyset$ is losing while the grand coalition $H$ is winning\footnote{A
canonical example is provided by a \emph{weighted voting game} in which player
$h$ has $r_{\alpha}^{h}\geq0$ votes in company $\alpha,$ equal to his shares
in $\alpha$ (adhering to the \textquotedblleft one dollar, one
vote\textquotedblright\ principle), and $f_{\alpha}\left(  S\right)  =1$ if,
and only if, $\sum_{\alpha\in S}r_{\alpha}^{h}\geq q_{\alpha}$ for some
\textquotedblleft quota\textquotedblright\ $0<q_{\alpha}\leq\sum_{\alpha\in
H}r_{\alpha}^{h}.$}.

The possibility has arisen of the merger of $A$ and $B$ but this requires the
approval of owners of both companies, i.e., merger must be voted for by a
winning coalition in both $A$ and $B.$

We postulate (in the spirit of \cite{Banzhaf} and \cite{Penrose}, see also
\cite{Felsenthal-Machover} for a detailed survey) that each $h$ has an
exogenously given probability $p_{h}$ of returning his ballot when called upon
to vote $Y$ (yes) in favor of merger; and that these probabilities are
independent across $h\in H$ and across the different occasions on which any
particular $h$ may be asked to return a ballot.

The management of both companies are strongly in favor of the merger. They can
send two separate ballots to any $h\in H,$ one of which represents a vote of $Y$ in
company $A,$ and the other a vote of $Y$ in $B$ (and this does seem to
be the norm in practice, where the decision-making inside any company is kept
independent of other companies). However, an interesting alternative presents
itself in the current context: they can send a \textquotedblleft joint
ballot\textquotedblright\ to $h,$ with the explicit understanding that if $h$
returns that ballot it will mean that $h$ voted $Y$ in \emph{both} $A$ and
$B.$

Thus, yet again, there is an $S$-strategy for every $S\subset H$: send joint
ballots to $h\in S$ and separate ballots to $h\in H\diagdown S$. If $\Pi
_{3}\left(  S\right)  $ is the probability of merger under the $S$-strategy
then the management is looking to maximize $\Pi_{3}\left(  S\right)  $.

\medskip

\emph{Which strategy $S\subset H$ will maximize the probability of merger?}

\bigskip

The advantages of risk-diversification notwithstanding, we have, as before:

\begin{proposition}
\label{Merger Claim} The merger probability $\Pi_{3}\left(  S\right)  $ is an
increasing function of $S$. In particular, the $H$-strategy maximizes the
probability of a merger.
\end{proposition}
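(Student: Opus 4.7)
The plan is to recognize the merger probability $\Pi_3$ as a convolution of the two voting functions $f_A$ and $f_B$, and then to apply Theorem \ref{th:main}. By hypothesis, both $f_A$ and $f_B$ are increasing $\{0,1\}$-valued functions on $\mathcal{H}$, so the main theorem will deliver monotonicity of $\Pi_3$ at once.

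First, I would set up the natural probability space describing the $S$-strategy. For each $h \in S$, let $Z_h$ be a single Bernoulli($p_h$) trial recording whether $h$ returns the joint ballot; if $Z_h=1$, then $h$ contributes a Y-vote to both $A$ and $B$. For each $h \notin S$, let $X_h$ and $Y_h$ be independent Bernoulli($p_h$) trials recording whether $h$ returns the separate $A$-ballot and $B$-ballot, respectively, with all trials across different $h$ independent. Writing
\[
T_A = \{h \in S : Z_h = 1\} \cup \{h \notin S : X_h = 1\},
\]
\[
T_B = \{h \in S : Z_h = 1\} \cup \{h \notin S : Y_h = 1\},
\]
the merger is approved precisely when $T_A$ is winning in $A$ and $T_B$ is winning in $B$, so $\Pi_3(S) = \mathbb{E}[f_A(T_A)\,f_B(T_B)]$.

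The next step is to match this expectation with the convolution $(f_A \star f_B)(S)$ defined in (\ref{=ES}). The coin tossing mechanism used there, namely a single shared Bernoulli($p_h$) for every $h \in S$ together with two independent Bernoulli($p_h$)'s for every $h \notin S$, is exactly the mechanism just described. Hence $\Pi_3 = f_A \star f_B$, and because $f_A, f_B$ are increasing, Theorem \ref{th:main} yields that $\Pi_3$ is increasing on $\mathcal{H}$. In particular $\Pi_3(H) \geq \Pi_3(S)$ for every $S \subset H$, so the $H$-strategy of sending only joint ballots maximizes the merger probability.

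The only non-routine step is the first: one must carefully couple the votes so that, for each $h \notin S$, the two ballots are independent, while for each $h \in S$ the two votes are perfectly correlated, and then recognize the resulting joint distribution as the one implicit in the definition (\ref{=ES}). Once this identification is made, the rest of the argument is immediate from Theorem \ref{th:main}; no property of the voting games beyond monotonicity of $f_A$ and $f_B$ is used.
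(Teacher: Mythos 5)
Your proof is correct and follows exactly the paper's route: identify $\Pi_3$ with the convolution $f_A \star f_B$ of the two (increasing) voting functions and invoke Theorem \ref{th:main}. The paper states the identification as "easy to see," whereas you spell out the coupling explicitly, which is a welcome but not substantively different elaboration.
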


\begin{proof}
See Section \ref{sec: Merger Claim}.
\end{proof}

\section{Example 4: \ Stochastic production - many inputs}

\label{sec:many}

Although the previous examples were binary in nature---two commodities, two
networks, two companies---this was merely for ease of exposition. In fact our
results hold in greater generality. To demonstrate this, in this section we
consider the case of stochastic production with many inputs---from a finite
commodity set $K$---and with a more general class of production functions that
includes the Cobb-Douglas functions as a special case.

\medskip An entrepreneur sources her inputs from a set $H$ of suppliers.
Supplier $h$ agrees to supply commodities from a subset $K^{h}$ of $K$ --- the
sets $K^{h}$ need not be disjoint. She further enters into a \textquotedblleft
shipping contract\textquotedblright\ $P^{h}$ with $h$, which is a partition of
$K^{h}$ into a disjoint union of shipments $K^{h}=K_{1}^{h}\sqcup\cdots\sqcup
K_{l}^{h}$. However there is uncertainity in shipping and shipment $K_{i}^{h}$
arrives with independent probability $p_{h}$.

After receiving the shipments, she produces an output given by%
\[
F\left(  \mathbf{S}\right)  =\prod\nolimits_{k\in K}F_{k}\left(  S_{k}\right)
,
\]
where $S_{k}$ is the set of players whose shipment of $k$ arrives
successfully, $\mathbf{S}$ is the \textquotedblleft success\textquotedblright%
\ tuple $\left(  S_{k}\right)  _{k\in K}$, and the $F_{k}$ are non-negative
increasing functions on subsets of $H$, that is to say $F_{k}\left(  S\right)
\geq F_{k}\left(  S^{\prime}\right)  $ if $S\supset S^{\prime}$. Her goal is
to choose the contract tuple $\mathbf{P}=\left(  P^{h}\right)  _{h\in H}$
which maximizes the expected output $\Phi\left(  \mathbf{P}\right)  $.

We now formulate a generalization of Proposition \ref{Production Claim}, which
involves a partial order on shipping contract tuples. If $\mathbf{P}=\left(
P^{h}\right)  _{h\in H}$ and $\mathbf{Q}=\left(  Q^{h}\right)  _{h\in H}$ are
two such tuples we write $\mathbf{P}\succeq\mathbf{Q}$ if $P^{h}$ is
\emph{coarser} than $Q^{h}$ for all $h$, that is if each $P^{h}$-shipment is a
union of $Q^{h}$-shipments. The maximal element for $\mathbf{\succeq}$ is the
\textquotedblleft coarse\textquotedblright\ tuple $\mathbf{C}=\left(
C^{h}\right)  _{h\in H}$ where $C^{h}$ is a single shipment of all commodities
in $K^{h}.$

\begin{proposition}
\label{production claim 2} The expected output is monotonic: $\mathbf{P}%
\succeq\mathbf{Q}$ implies $\Phi\left(  \mathbf{P}\right)  \geq\Phi\left(
\mathbf{Q}\right)  $. In particular, the coarse tuple $\mathbf{C}$ maximizes
expected output.
\end{proposition}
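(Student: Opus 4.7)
My plan is to reduce the proposition to an elementary two-block coarsening for a single supplier, and then to invoke the two-point Harris inequality, which is a special case of Theorem \ref{th:main}.

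I would first observe that the coarsening order $\succeq$ on shipping contract tuples is generated by \emph{elementary coarsenings}: those in which a single supplier $h^{\ast}$ has exactly two of its shipment blocks $A$ and $B$ merged into a single block $A \cup B$, with every other block and every other supplier's partition left untouched. Since any chain $\mathbf{P} \succeq \mathbf{Q}$ can be realised by a finite sequence of such elementary steps, transitivity of $\geq$ reduces the problem to proving $\Phi(\mathbf{P}) \geq \Phi(\mathbf{Q})$ in this elementary case.

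Next, I would condition on every shipment outcome other than those for the two blocks $A$ and $B$ themselves. By the independence of shipments, the remaining randomness under $\mathbf{Q}$ consists of two independent Bernoulli$(p_{h^{\ast}})$ variables $X_A, X_B$, whereas under $\mathbf{P}$ it is a single Bernoulli$(p_{h^{\ast}})$ variable $X$ governing the merged shipment $A \cup B$. For each $k \in A \cup B$, let $T_k$ denote the (now-fixed) set of suppliers $h \neq h^{\ast}$ whose shipment containing $k$ has succeeded, and write $T_k^{(1)} := T_k \cup \{h^{\ast}\}$ and $T_k^{(0)} := T_k$. Define $f, g : \{0, 1\} \to \mathbb{R}_{\geq 0}$ by $f(j) = \prod_{k \in A} F_k(T_k^{(j)})$ and $g(j) = \prod_{k \in B} F_k(T_k^{(j)})$; since each $F_k$ is non-negative and increasing, so are $f$ and $g$. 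The conditional expected output then equals $C \cdot \mathbb{E}[f(X_A) g(X_B)]$ under $\mathbf{Q}$ and $C \cdot \mathbb{E}[f(X) g(X)]$ under $\mathbf{P}$, where $C \geq 0$ collects the factors $F_k(S_k)$ for the commodities $k \notin A \cup B$, all of which are determined by the conditioning.

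Finally, the desired conditional inequality reduces to $\mathbb{E}[f(X) g(X)] \geq \mathbb{E}[f(X_A)]\,\mathbb{E}[g(X_B)]$, the two-point Harris inequality, which is a direct instance of Theorem \ref{th:main} applied to the singleton Boolean lattice on $\{h^{\ast}\}$; one may also verify it in one line, as the difference equals $p_{h^{\ast}}(1-p_{h^{\ast}})\bigl(f(1)-f(0)\bigr)\bigl(g(1)-g(0)\bigr) \geq 0$. Integrating back over the conditioning then yields the elementary case, and iterating over elementary coarsenings completes the proof. I expect the main subtlety to be purely bookkeeping: one must verify that the conditioning really does cleanly isolate the two coins $X_A, X_B$, with every other shipment of $h^{\ast}$, shipments from every other supplier, and output factors from commodities outside $A \cup B$ combining into the single non-negative constant $C$, so that the problem genuinely reduces to the elementary 2-point correlation inequality.
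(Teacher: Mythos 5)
Your proof is correct and follows essentially the same route as the paper: the paper deduces Proposition \ref{production claim 2} by specializing the game-theoretic model to the symmetric case and invoking Propositions \ref{thm: dominant} and \ref{Key Proposition}, whose combined content is exactly your reduction to elementary two-block merges followed by conditioning on all other shipments and the identity $\pi^{\prime}-\pi=p(1-p)(a_{1}-a_{0})(b_{1}-b_{0})c\geq 0$. You have simply inlined those intermediate propositions rather than citing them.
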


\begin{proof}
This is proved in Section \ref{sec:pc2}, where it is deduced from the more
general game-theoretic considerations of the next section.
\end{proof}

\smallskip

\begin{remark}
Suppose supplier $h$ agrees to send $x_{k}^{h}$ units of commodity $k$. Then
the total amount of $k$ received by the entrepreneur is $x_{k}=\sum_{h\in
S_{k}}x_{k}^{h}$. If we set $F_{k}\left(  S_{k}\right)  =\left(  x_{k}\right)
^{\alpha_{k}}$ for $a_{k}>0$ then the $F_{k}$ are increasing functions, and
$F\left(  \mathbf{S}\right)  =\prod_{k\in K}\left(  x_{k}\right)  ^{\alpha
_{k}}$ is the Cobb-Douglas production function.
\end{remark}

\section{A game-theoretic generalization \label{GTperspective}}

The ideas of this paper also have applications to game theory. To demonstrate
this we now recast the previous example as a strategic game among the suppliers.

\medskip We use the same setup, $K,H,K^{h},P^{h},p^{h},S_{k},\ldots$, with two
key differences.

\begin{enumerate}
\item We regard the partition $P^{h}$ of $K^{h}$ as a \emph{strategic} choice
of shipments by $h$.

\item If the success tuple is $\mathbf{S}=\left(  S_{k}\right)  _{k\in K}$
then player $h$ receives the payoff
\[
F^{h}\left(  \mathbf{S}\right)  =\prod\nolimits_{k\in K}F_{k}^{h}\left(
S_{k}\right)  ,
\]
where the $F_{k}^{h}$ are non-negative increasing functions that \emph{may}
depend on $h$.
\end{enumerate}

We will show that in this game\footnote{See Remarks \ref{NE and dominance},
\ref{ex post optimality}, \ref{incentive compatibility} for more details on
the game.} the coarse strategy $\mathbf{C}$ is \emph{dominant} in a very
strong sense. For this we consider the following scenario. Suppose the
shipping tuple $\mathbf{P}$ has been played, in which $P^{h}$ corresponds to
the partition $K^{h}=K_{1}^{h}\sqcup\cdots\sqcup K_{l}^{h}$ with $l\geq2$.
Player $h$ is informed of the success/failure of all shipments, including his
own, except for $K_{1}^{h}$ and $K_{2}^{h}.$ Let $\pi$ be his expected payoff
conditional on this information, and let $\pi^{\prime}$ be his expected payoff
if he chooses to combine the commodities in $K_{1}^{h}$ and $K_{2}^{h}$ into a
single shipment before sending them off.

\begin{proposition}
\label{Key Proposition} In the above scenario we have $\pi^{\prime}\geq\pi$.
\end{proposition}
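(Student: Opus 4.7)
The plan is to condition on every piece of information player $h$ is told, so that the only remaining randomness is the arrival of the two shipments $K_1^h$ and $K_2^h$. Because $h$'s payoff is the product $\prod_{k\in K}F_k^h(S_k)$, the factors with $k\notin K_1^h\cup K_2^h$ become a common non-negative constant, call it $C$, on both sides of the comparison, while the remaining factors depend only on whether $h$ is adjoined to the otherwise-known set $T_k:=S_k\setminus\{h\}$ of successful non-$h$ shippers of $k$.

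Setting $a_k:=F_k^h(T_k\cup\{h\})$ and $b_k:=F_k^h(T_k)$, monotonicity of $F_k^h$ yields $a_k\geq b_k\geq 0$. With $A_i:=\prod_{k\in K_i^h}a_k$, $B_i:=\prod_{k\in K_i^h}b_k$, $p:=p^h$, and $q:=1-p$, I would then write
\[ \pi = C\,(pA_1+qB_1)(pA_2+qB_2), \qquad \pi' = C\,(pA_1A_2+qB_1B_2), \]
using that under the separating strategy the two shipments are independent Bernoulli$(p)$ trials, while under the merged strategy a single Bernoulli$(p)$ trial synchronizes both index sets.

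The final step is to expand $\pi'-\pi$ and watch it collapse algebraically to $Cpq\,(A_1-B_1)(A_2-B_2)$, which is non-negative because each factor is. I do not anticipate a real obstacle: the proof reduces, via these straightforward simplifications, to the elementary inequality $(A_1-B_1)(A_2-B_2)\geq 0$ for pairs of ordered non-negative reals---the same ``correlation of increasing functions'' phenomenon that drives Theorem \ref{th:main}, now specialized to the two Bernoulli events generated by the two shipments. The only care required is bookkeeping: verifying that the conditioning genuinely leaves two independent Bernoulli trials, and that the product form of $F^h$ permits the common factor $C$ to be pulled out cleanly.
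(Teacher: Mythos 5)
Your proposal is correct and follows essentially the same route as the paper's own proof: condition on everything except the two shipments, pull out the constant factor from the commodities outside $K_1^h\cup K_2^h$, reduce the two payoffs to $C(pA_1A_2+qB_1B_2)$ versus $C(pA_1+qB_1)(pA_2+qB_2)$, and observe that the difference is $Cpq(A_1-B_1)(A_2-B_2)\geq 0$. The paper writes $a_1,a_0,b_1,b_0,c$ where you write $A_1,B_1,A_2,B_2,C$, and your extra bookkeeping (that $S_k$ differs only by adjoining $h$, and that non-negativity of the $F_k^h$ is what makes the products ordered) is exactly the justification the paper leaves implicit.
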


\begin{proof}
See Section \ref{sec: Key Proposition}.
\end{proof}

As an immediate consequence we obtain the following result.

\begin{proposition}
\label{thm: dominant} Fix a choice of strategies by all players except $h$ and
let $\pi(P),\pi(Q)$ be the expected payoffs to $h$ if he chooses the
partitions $P,Q$, respectively.

If $P$ is coarser than $Q$ then we have $\pi(P)\ge\pi(Q)$. In particular, the
coarse partition is a dominant strategy for every player.
\end{proposition}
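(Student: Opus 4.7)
The plan is to derive Proposition \ref{thm: dominant} from Proposition \ref{Key Proposition} by iterating elementary merges and then invoking the tower property of conditional expectation. Concretely, if $P$ is coarser than $Q$, every block of $P$ is a disjoint union of blocks of $Q$, so there is a chain
\[
Q = Q_{0} \preceq Q_{1} \preceq \cdots \preceq Q_{n} = P
\]
of partitions of $K^{h}$ (with $n = |Q| - |P|$) in which each $Q_{i+1}$ is obtained from $Q_{i}$ by replacing two blocks of $Q_{i}$ by their union and leaving the remaining blocks unchanged. By transitivity it then suffices to prove $\pi(Q_{i+1}) \ge \pi(Q_{i})$ for a single such elementary merge.

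For such a pair, Proposition \ref{Key Proposition} applies directly, with the two merged blocks of $Q_{i}$ playing the role of $K_{1}^{h}$ and $K_{2}^{h}$. It gives, for every realization of the success/failure indicators of all other shipments---namely $h$'s remaining blocks under $Q_{i}$ together with every shipment sent by every other player, whose strategies are held fixed---the conditional inequality $\pi' \ge \pi$. Since this holds pointwise on the conditioning $\sigma$-algebra, integrating against its distribution and invoking the tower property yields the unconditional inequality $\pi(Q_{i+1}) \ge \pi(Q_{i})$. Chaining these inequalities over $i = 0, \dots, n-1$ delivers $\pi(P) \ge \pi(Q)$.

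For the dominance assertion, the coarse partition $C^{h} = \{K^{h}\}$ is coarser than every partition of $K^{h}$. With the strategies of the other players held fixed, the first part applied with $P = C^{h}$ then yields $\pi(C^{h}) \ge \pi(Q)$ for every strategy $Q$ of player $h$, which is exactly the definition of dominance.

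The main obstacle I anticipate is essentially notational: keeping careful track of which random variables are being conditioned on at each merge, and ensuring the joint distribution over which we integrate is correctly pinned down by the fixed strategies of the other players together with $h$'s remaining blocks. Beyond this bookkeeping, Proposition \ref{Key Proposition} supplies all the substantive content; no further combinatorial or probabilistic input is required.
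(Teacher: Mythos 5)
Your proposal is correct and follows essentially the same route as the paper: reduce to a single elementary merge of two blocks, apply Proposition \ref{Key Proposition} to that merge, and average over the conditioning to get the unconditional inequality. The paper's own proof is just a terser version of this argument, leaving the tower-property step implicit.
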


\begin{proof}
See Section \ref{sec: thm: dominant}.
\end{proof}

\section{Proofs}

\label{Proofs} We first establish a key mathematical result that underlies our
examples. This generalizes the well-known Harris inequality from extremal combinatorics.

Let $\mathcal{H}$ be the set of subsets of a finite set $H$. For $S$ in
$\mathcal{H}$ we define a probabilty measure $\mu_{S}$ on $\mathcal{H}%
\times\mathcal{H}$ as follows: $\mu_{S}(S_{1},S_{2})=0$ if $S\cap S_{1}\ne
S\cap S_{2}$, otherwise
\[
\mu_{S}(S_{1},S_{2})=P(S,S_{1})P(H\setminus S,S_{1}) P(H\setminus S,S_{2})
\]
where $P(X,Y):=\prod_{h\in X\cap Y}(p_{h})\prod_{h\in X\setminus Y}(1-p_{h})$.
Then $\mu_{S}(S_{1},S_{2})$ is precisely the probability that the following
random procedure leads to the pair $(S_{1},S_{2})$:

\begin{itemize}
\item For each $h$ in $S$ we toss a coin with probability $p_{h}$ of landing
heads; if heads we include $h$ in both $S_{1}$ and $S_{2}$, and if tails then
we exclude $h$ from both sets.

\item For each $h$ in $H\setminus S$ we toss the coin once to decide whether
to include $h$ in $S_{1}$, and once again, independently, to decide whether to
include $h$ in $S_{2}$.
\end{itemize}

\begin{definition}
If $f$ and $g$ are real valued functions on $\mathcal{H}$ then we define
\begin{equation}
f\star g(S):=\sum\nolimits_{S_{1},S_{2}\subseteq H}f(S_{1})g(S_{2})\mu
_{S}(S_{1},S_{2}). \label{=ES}%
\end{equation}

\end{definition}

As before, we say that $f$ is \emph{increasing }if $S\supseteq T$ implies
$f\left(  S\right)  \geq f\left(  T\right)  $.

\begin{theorem}
\label{th:main}If $f$ and $g$ are increasing functions then so is $f\star g.$
\end{theorem}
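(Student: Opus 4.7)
The plan is to reduce monotonicity of $f \star g$ to a one-step version, since the partial order on $\mathcal{H}$ is generated by single-element additions. That is, it suffices to show $f \star g(S \cup \{h\}) \geq f \star g(S)$ for every $S \in \mathcal{H}$ and every $h \in H \setminus S$.

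For such $S$ and $h$, set $H' = H \setminus \{h\}$. The two coin-tossing procedures defining $\mu_S$ and $\mu_{S \cup \{h\}}$ differ only in how they treat the element $h$: under $\mu_S$ one makes two independent tosses for $h$ (one for $S_1$ and one for $S_2$), while under $\mu_{S \cup \{h\}}$ one makes a single coupled toss. At every other element $h' \in H'$ the two procedures are identical. I would therefore condition on the outcomes at all $h' \in H'$. Writing $A = S_1 \cap H'$ and $B = S_2 \cap H'$, let $\phi(A,B)$ denote the joint probability of this partial outcome; it is the same for both $\mu_S$ and $\mu_{S \cup \{h\}}$, and is supported on pairs with $A \cap S = B \cap S$.

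For such a conditioning $(A,B)$, write $f_1 = f(A \cup \{h\})$, $f_0 = f(A)$, $g_1 = g(B \cup \{h\})$, $g_0 = g(B)$, and $p = p_h$. Under $\mu_S$ the four states of $h$ in $(S_1,S_2)$ occur with probabilities $p^2, p(1-p), (1-p)p, (1-p)^2$, while under $\mu_{S \cup \{h\}}$ only the coupled outcomes occur, with probabilities $p$ and $1-p$. A routine expansion then collapses to the familiar two-point identity
\[
\bigl[p f_1 g_1 + (1-p) f_0 g_0\bigr] - \bigl[p^2 f_1 g_1 + p(1-p) f_1 g_0 + p(1-p) f_0 g_1 + (1-p)^2 f_0 g_0\bigr] = p(1-p)(f_1-f_0)(g_1-g_0).
\]
Summing over $(A,B)$ with weight $\phi(A,B)$ gives
\[
f\star g(S \cup \{h\}) - f\star g(S) = \sum_{A,B} \phi(A,B)\, p(1-p)\, \bigl(f(A \cup \{h\}) - f(A)\bigr)\bigl(g(B \cup \{h\}) - g(B)\bigr).
\]
Since $f$ and $g$ are increasing, both bracketed factors are nonnegative, and the remaining weights are nonnegative, so the right-hand side is $\geq 0$.

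The main obstacle is conceptual rather than computational: one has to identify the correct coupling between the two measures, namely conditioning on all coin tosses at $H \setminus \{h\}$ and noting that their joint distribution is unaffected by the status of $h$. Once this coupling is in place, the remaining work is the two-point covariance computation above, which is the atomic case of the Harris/Chebyshev inequality and requires no further machinery.
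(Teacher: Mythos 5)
Your proof is correct and follows essentially the same route as the paper: reduce to adding a single element $h$, condition on the coin tosses over $H\setminus\{h\}$ (which have the same joint law under both measures), and apply the two-point identity $p(1-p)(f_1-f_0)(g_1-g_0)\geq 0$, which the paper isolates as its Lemma for the case $|H|=1$.
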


We first prove a special case of the result.

\begin{lemma}
\label{lem-key}Theorem \ref{th:main} holds for the case $|H|=1$.
\end{lemma}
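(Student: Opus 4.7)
The plan is to reduce to a direct two-point calculation, since $|H|=1$ collapses $\mathcal{H}$ to just $\{\emptyset,\{h\}\}$. Write $p=p_h$, and abbreviate $f(0):=f(\emptyset)$, $f(1):=f(\{h\})$, and similarly for $g$. The monotonicity hypothesis becomes the scalar inequalities $f(1)\geq f(0)$ and $g(1)\geq g(0)$. I need to verify $(f\star g)(\emptyset)\leq (f\star g)(\{h\})$.

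First I would unpack the measure $\mu_S$ from the definition in the two cases. When $S=\{h\}$, the coin tossing prescription forces $S_1=S_2$: with probability $p$ both equal $\{h\}$ and with probability $1-p$ both equal $\emptyset$, so
\[
(f\star g)(\{h\})=p\,f(1)g(1)+(1-p)\,f(0)g(0).
\]
When $S=\emptyset$, the coins for $S_1$ and $S_2$ are independent, which produces the factored expression
\[
(f\star g)(\emptyset)=\bigl(p\,f(1)+(1-p)\,f(0)\bigr)\bigl(p\,g(1)+(1-p)\,g(0)\bigr).
\]

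The remaining step is routine: subtract and simplify. Setting $a:=f(1)-f(0)\geq 0$ and $b:=g(1)-g(0)\geq 0$, a short algebraic manipulation yields
\[
(f\star g)(\{h\})-(f\star g)(\emptyset)=p(1-p)\,a\,b\geq 0,
\]
which is exactly the desired monotonicity. Conceptually this is the classical Chebyshev/two-point Harris identity: tying the two copies of the coin together covers the covariance of two monotone functions of the same Bernoulli variable.

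There is no serious obstacle here; the entire content of the lemma is this one-line covariance computation, and the only care needed is in translating the coin-tossing description of $\mu_S$ into the correct joint distributions on $(S_1,S_2)$ for $S=\emptyset$ versus $S=\{h\}$. This base case is precisely the atomic building block that the proof of Theorem \ref{th:main} will then leverage inductively (or by an independent-product argument) over the elements of $H$.
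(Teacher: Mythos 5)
Your proposal is correct and follows essentially the same route as the paper: unpack $\mu_{\{h\}}$ and $\mu_\emptyset$ into the tied and independent two-point distributions, compute both convolutions explicitly, and observe that the difference is $p(1-p)\bigl(f(1)-f(0)\bigr)\bigl(g(1)-g(0)\bigr)\geq 0$. This matches the paper's proof of Lemma \ref{lem-key} line for line, up to notation.
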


\begin{proof}
Let $H$ be the singleton set $\{h\}$, let $p=p_{h}$ and write
\[
a = f\left(  \emptyset  \right)  , \; b = g\left(  \emptyset
\right)  ,\; c=f\star g \left(\emptyset  \right)  ,\quad a^{\prime
}=f\left(  H  \right)  ,\; b^{\prime}=g\left( H  \right)  ,\; c^{\prime}=f\star g \left(  H
\right)  .
\]
Then we need to show that $c^{\prime}-c \ge0$. However by an easy calculation
we have
\begin{gather*}
c=\left[  (1-p)a+pa^{\prime}\right]  \left[  (1-p)b+pb^{\prime}\right]  ,
\quad c^{\prime}=\left(  1-p\right)  ab+pa^{\prime}b^{\prime},\\
c^{\prime}-c=p\left(  1-p\right)  \left(  a^{\prime}-a\right)  \left(
b^{\prime}-b\right)  .
\end{gather*}
Since $f$ and $g$ are increasing we have $a^{\prime}\geq a,\; b^{\prime}\geq
b$, and thus $c^{\prime}-c\ge0.$
\end{proof}

\medskip

\begin{proof}
[Proof of Theorem \ref{th:main}]It clearly suffices to show that $f\star
g\left(  S\right)  \geq f\star g\left(  T\right)  $ in the case where
$S\setminus T$ consists of a single element, $h$, say. Then the two coin
tossing procedures agree on $H^{\prime}=H\setminus\{h\}$ and by grouping terms
we can rewrite \medskip%
\begin{align}
f\star g\left(  S\right)   &  =\sum\nolimits_{T_{1},T_{2}\subseteq H^{\prime}}
f_{T_{1}}\star g_{T_{2}}\left(  \{ h \} \right)  \mu_{T}^{\prime}(T_{1}%
,T_{2})\label{=A}\\
f\star g\left(  T\right)   &  =\sum\nolimits_{T_{1},T_{2}\subseteq H^{\prime}%
}f_{T_{1}}\star g_{T_{2}}\left(  \emptyset \right)  \mu_{T}^{\prime}(T_{1},T_{2}),
\label{=B}%
\end{align}
where $\mu_{T}^{\prime}$ is the measure on pairs of subsets of $H^{\prime}$
induced by $T$, $f_{T_{1}}$ and $g_{T_{2}}$ are function on subsets of the
singleton set $\{h\}$ defined by
\[
f_{T_{1}}\left( \emptyset\right)  =f\left(  T_{1}\right)  ,\; f_{T_{1}}\left(
\{ h \} \right)  =f\left(  T_{1} \cup\left\{  h\right\}  \right)  ,\;
g_{T_{2}}\left(  \emptyset \right)  =g\left(  T_{2}\right)  ,\; g_{T_{2}}\left(
\{ h \} \right)  =g\left(  T_{2} \cup\left\{  h\right\}  \right)  ,
\]
and the convolution structure on $\{h\}$ corresponds to $p=p_{h}$. By Lemma
\ref{lem-key} each term in \eqref{=A} dominates the corresponding term in
\eqref{=B}, and thus the result follows
\end{proof}

\bigskip

For the two extreme cases $S=H$ and $S=\emptyset$ we have%
\[
f\star g\left(  H\right)  =\operatorname*{Exp}(fg),\quad f\star g(\emptyset
)=\operatorname*{Exp}(f)\operatorname*{Exp}(g),
\]
where $\operatorname*{Exp}$ is the expectation with respect to the measure
$\mu$ on $\mathcal{H}$ defined by
\begin{equation}
\mu(S)=%
%TCIMACRO{\tprod \nolimits_{h\in S}}%
%BeginExpansion
{\textstyle\prod\nolimits_{h\in S}}
%EndExpansion
p_{h}%
%TCIMACRO{\tprod \nolimits_{h\notin S}}%
%BeginExpansion
{\textstyle\prod\nolimits_{h\notin S}}
%EndExpansion
(1-p_{h})=P(H,S). \label{=prodmeas}%
\end{equation}
Thus we obtain the following well-known inequality due to Harris \cite{Harris}.

\begin{corollary}
\label{AH} If $f$ and $g$ are increasing then we have
\begin{equation}
\operatorname*{Exp}(fg)\geq\operatorname*{Exp}(f)\operatorname*{Exp}(g).
\label{generalized Harris}%
\end{equation}

\end{corollary}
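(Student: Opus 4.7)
The plan is to deduce the corollary as an immediate consequence of Theorem \ref{th:main}, by identifying the two endpoints $S=H$ and $S=\emptyset$ of the convolution $f\star g$ with the quantities appearing in the Harris inequality.

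First I would verify the two boundary formulas for $f\star g$ that are displayed just before the corollary. For $S=H$ the compatibility condition $S\cap S_1 = S\cap S_2$ forces $S_1=S_2$, and one checks from the definition of $P(X,Y)$ that $P(H,T)=\mu(T)$ while $P(H\setminus H,T)=P(\emptyset,T)=1$ (empty product). Hence
$$ f\star g(H) = \sum_{T\subseteq H} f(T)g(T)\,\mu(T) = \operatorname*{Exp}(fg). $$
For $S=\emptyset$ the compatibility condition is vacuous, $P(\emptyset,S_1)=1$, and $P(H,S_i)=\mu(S_i)$, so the sum factors:
$$ f\star g(\emptyset) = \sum_{S_1,S_2\subseteq H} f(S_1)g(S_2)\,\mu(S_1)\mu(S_2) = \operatorname*{Exp}(f)\,\operatorname*{Exp}(g). $$

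With these two identities in hand, the corollary is immediate. Since $f$ and $g$ are increasing, Theorem \ref{th:main} tells us that $f\star g$ is an increasing function on $\mathcal{H}$. Applied to the containment $H\supseteq\emptyset$, this gives
$$ \operatorname*{Exp}(fg) \;=\; f\star g(H) \;\geq\; f\star g(\emptyset) \;=\; \operatorname*{Exp}(f)\,\operatorname*{Exp}(g), $$
which is precisely \eqref{generalized Harris}.

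There is no real obstacle at this stage, since all the substantive work has been absorbed into Theorem \ref{th:main} (and ultimately into the one-element case handled in Lemma \ref{lem-key}). The only point requiring a little care is the slight asymmetry in the definition of $\mu_S$, which makes the two boundary evaluations look different at first glance but simplify cleanly once one plugs in $S=H$ and $S=\emptyset$ respectively.
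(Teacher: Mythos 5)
Your proposal is correct and follows exactly the paper's route: the paper likewise records the two boundary evaluations $f\star g(H)=\operatorname*{Exp}(fg)$ and $f\star g(\emptyset)=\operatorname*{Exp}(f)\operatorname*{Exp}(g)$ and then invokes the monotonicity of $f\star g$ from Theorem \ref{th:main} applied to $H\supseteq\emptyset$. Your explicit verification of the boundary formulas (which the paper merely asserts) is accurate.
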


If $\mathcal{F}=\{f_{i},i\in I\}$ is an \emph{set }of functions and $\pi
:I_{1}\sqcup\cdots\sqcup I_{k}$ is a partition of the index set $I$ then we
define $E_{\mathcal{F}}\left(  \pi\right)  =\operatorname*{Exp}\left(
\prod\nolimits_{i\in I_{1}}f_{i}\right)  \cdots\operatorname*{Exp}\left(
{\prod\nolimits_{i\in I_{k}}}f_{i}\right)  .$

\begin{corollary}
\label{Corollary} Let $\mathcal{F}$ be a finite set of non-negative increasing
functions and let $\pi$ and $\pi^{\prime}$ be partitions of $I$ such that
$\pi^{\prime}$ refines $\pi$ then we have $E_{\mathcal{F}}\left(  \pi^{\prime
}\right)  \leq E_{\mathcal{F}}\left(  \pi\right)  .$
\end{corollary}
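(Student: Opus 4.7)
The plan is to reduce the general refinement inequality to a single merge of two blocks and then invoke Corollary \ref{AH} (the Harris inequality) on the product functions associated with those blocks.

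First I would observe that if $\pi'$ refines $\pi$, then one can obtain $\pi$ from $\pi'$ by a finite sequence of \emph{elementary merges}, each of which replaces two blocks $I_a, I_b$ of a partition by the single block $I_a \cup I_b$ while leaving the other blocks unchanged. Chaining these single steps and using non-negativity of every factor $\operatorname{Exp}(\prod_{i \in I_j} f_i)$ (which follows from non-negativity of the $f_i$), it suffices to prove the inequality $E_{\mathcal{F}}(\pi') \le E_{\mathcal{F}}(\pi)$ in the special case where $\pi'$ and $\pi$ differ by a single such merge.

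Next I would reduce to an application of Corollary \ref{AH}. Write $\pi'$ as $I_1 \sqcup I_2 \sqcup I_3 \sqcup \cdots \sqcup I_k$ and $\pi$ as $(I_1 \cup I_2) \sqcup I_3 \sqcup \cdots \sqcup I_k$. Then
\[
E_{\mathcal{F}}(\pi) = \operatorname{Exp}\Bigl(\prod_{i \in I_1 \cup I_2} f_i\Bigr) \prod_{j=3}^{k} \operatorname{Exp}\Bigl(\prod_{i \in I_j} f_i\Bigr), \quad E_{\mathcal{F}}(\pi') = \operatorname{Exp}\Bigl(\prod_{i \in I_1} f_i\Bigr) \operatorname{Exp}\Bigl(\prod_{i \in I_2} f_i\Bigr) \prod_{j=3}^{k} \operatorname{Exp}\Bigl(\prod_{i \in I_j} f_i\Bigr).
\]
Setting $f := \prod_{i \in I_1} f_i$ and $g := \prod_{i \in I_2} f_i$, I would verify that $f$ and $g$ are non-negative and increasing on $\mathcal{H}$, since a product of non-negative increasing functions is non-negative and increasing. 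Corollary \ref{AH} then gives $\operatorname{Exp}(fg) \ge \operatorname{Exp}(f)\operatorname{Exp}(g)$, which, after multiplying both sides by the common non-negative factor $\prod_{j=3}^k \operatorname{Exp}(\prod_{i \in I_j} f_i)$, yields $E_{\mathcal{F}}(\pi) \ge E_{\mathcal{F}}(\pi')$.

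I do not expect a genuine obstacle here, since the corollary is essentially a book-keeping consequence of Harris. The only mild care point is justifying that the single-merge reduction together with multiplying by the untouched factors preserves direction of the inequality—this relies on non-negativity of the functions $f_i$, which is explicitly part of the hypothesis.
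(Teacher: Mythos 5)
Your proof is correct and follows essentially the same route as the paper: the paper's own proof also notes that a product of non-negative increasing functions is non-negative and increasing, and then applies the Harris inequality (Corollary \ref{AH}) iteratively. You have simply made the ``iterated application'' explicit via the elementary-merge decomposition, which is exactly the intended book-keeping.
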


\begin{proof}
If $f_{1},\ldots,f_{k}$ are nonnegative and increasing, then so is their
product. Now the result follows by iterated application of
(\ref{generalized Harris}).
\end{proof}
\\

We now give proofs of all the propositions in the previous sections.

\subsection{Proof of Proposition \ref{Production Claim}}

\label{sec: Production Claim}

\begin{proof}
It is easy to see that the output for the $S$-strategy is $\Pi_{1}=F_{1}\star
F_{2}$. Thus the result follows from Theorem \ref{th:main}.
\end{proof}

%\begin{remark} \label{rem:eq} The proof shows that Proposition \ref{Production Claim} is \emph{equivalent} to Theorem \ref{th:main}. \end{remark}

\subsection{Proof of Proposition \ref{Military Claim}}

\label{sec: Military Claim}

\begin{proof}
Let $f$ and $g$ be the characteristic functions of $\mathcal{C}_{R}$ and
$\mathcal{C}_{B}$, then by assumption $f,g$ are increasing. We have
$f(S_{1})g(S_{2})=1$ or $0$ according as the pair $(S_{1},S_{2})$ does or does
not succeed in disrupting both networks. It follows that the success
probability for the $S$-strategy is $\Pi_{2}=f\star g$, which is increasing by
Theorem \ref{th:main}.
\end{proof}

\subsection{Proof of Proposition \ref{Military Claim 2}}

\label{sec: Military Claim 2}

\begin{proof}
The probabilty $F(S)$ of disabling neither network is given by
\[
F=(1-f)\star(1-g) =(f-1)\star(g-1),
\]
where $f$ and $g$ are as in the previous proof. Since $(f-1)$ and $(g-1)$ are
increasing functions so is $F$. This implies that $G =1-\Pi_{2}-F$ is a
decreasing function.
\end{proof}

\subsection{Proof of Proposition \ref{Merger Claim}}

\label{sec: Merger Claim}

\begin{proof}
It is easy to see that the success probability of the $S$-strategy is $\Pi
_{3}=f_{A}\star f_{B}$, where $f_{A}$ and $f_{B}$ are the voting functions of
companies $A$ and $B$. Now $f_{A}$ and $f_{B}$ are increasing by assumption
and thus by Theorem \ref{th:main} so is $\Pi_{2}$.
\end{proof}

\subsection{Proof of Proposition \ref{Key Proposition}}

\label{sec: Key Proposition}

\begin{proof}
The proof involves the same idea as Lemma \ref{lem-key}. The details are as follows.

Let $K^{\prime}$ be the set of commodities not in $K_{1}^{h}\cup K_{2}^{h}$,
then the sets $S_{k},k\in K^{\prime}$ are constant under the conditioning
assumption, and so is the product $c=\prod_{k\in K^{\prime}}F_{k}^{h}\left(
S_{k}\right)  $. The products $a=\prod_{k\in K_{1}^{h}}F_{k}^{h}\left(
S_{k}\right)  $ and $b=\prod_{k\in K_{2}^{h}}F_{k}^{h}\left(  S_{k}\right)  $
have two possible values $a_{1}\geq a_{0}$ and $b_{1}\geq b_{0}$ corresponding
to the arrival/non-arrival of shipments $K_{1}^{h}$ and $K_{2}^{h}$,
respectively. When sent separately these arrive with (independent) probability
$p=p_{h},$ while sent together they arrive together with probability $p$. The
payoff is the expectation of the product $abc$, and so we get%
\[
\pi^{\prime}=\left[  \left(  1-p\right)  a_{0}b_{0}+pa_{1}b_{1}\right]
c,\text{\quad}\pi=\left[  (1-p)a_{0}+pa_{1}\right]  \left[  (1-p)b_{0}%
+pb_{1}\right]  c.
\]
Now a straightforward algebraic calculation shows that%
\[
\pi^{\prime}-\pi=p\left(  1-p\right)  \left(  a_{1}-a_{0}\right)  \left(
b_{1}-b_{0}\right)  c
\]
Every factor in this expression is non-negative, which implies $\pi^{\prime
}-\pi\geq0$.
\end{proof}

\subsection{Proof of Proposition \ref{thm: dominant}}

\label{sec: thm: dominant}

\begin{proof}
We can go from any partition to a coarser partition in a sequence of steps,
where at each step we combine two parts of a partition into a single part.
Clearly it is enough to show that the desired inequality holds at each step of
this procedure. But this follows from Proposition \ref{Key Proposition}.
\end{proof}

\subsection{Proof of Proposition \ref{production claim 2}}

\label{sec: production claim 2}

\label{sec:pc2}

\begin{proof}
We specialize the strategic model to the "symmetric" case in which $F_{k}%
^{h}=F_{k}$ for all $h$. Then each player's payoff is the same as that of the
principal in stochastic production model of the previous section. Thus the two
optimization problems are the same, and hence Proposition
\ref{production claim 2} follows from Proposition \ref{thm: dominant}.
\end{proof}

\section{Remarks}

\subsection{Robust Optimality\label{robust optimality}}

Pooling is optimal \emph{for all possible characteristics }of the population
under consideration, e.g., $\left(  x_{h},y_{h},p_{h}\right)  _{h\in H}$ in
Example 1, $\left(  H_{h},\mathcal{C}_{h}\right)  _{h\in\left\{  A,B\right\}
}$ in Example 2, and so on. In short, the $H$-strategy is not only optimal but
\emph{robustly optimal}. And therein lies its full value: pooling can be
invoked with impunity, without having detailed information of the population characteristics.

\subsection{Costs\label{costs}}

We have assumed various monetary costs (such as those of firing missiles in
Example 1, or of shipping inputs in Example 2, etc.) to be zero. Suppose these
costs existed, but with \textquotedblleft economies of scale\textquotedblright%
, i.e., the cost of joint action is less than the sum of the costs of the
separate actions. Then the pooling strategy is even more efficient, as it
diminishes costs, \emph{over and above} the favorable probabilistic
implications of Theorem \ref{th:main}.

\subsection{\textbf{Nash Equilibrium and Dominant
Strategies\label{NE and dominance}}}

For completeness' sake, we review some standard game-theoretic definitions.
First recall that once each supplier $h$ chooses his strategy $P^{h}$ (i.e., a
partition of $K^{h}),$ a probability distribution is induced on success tuples
$\mathbf{S=}\left(  S_{k}\right)  _{k\in K}\in\mathcal{H}^{K}$ via the
independent arrivals of the elements of the partitions in $\mathbf{P}=\left(
P^{h}\right)  _{h\in H}$; and that the payoff to $h$ is the expectation
$\Phi^{h}\left(  \mathbf{P}\right)  $ of $\prod\nolimits_{k\in K}F_{k}%
^{h}\left(  S_{k}\right)  $ w.r.t. this probability distribution. We are now
ready for the definitions (and the subsequent remarks below). A strategy
$P^{h}$ of player $h$ is a \emph{best reply} to the strategy-selection
$\left(  P^{i}\right)  _{i\in H\diagdown\left\{  h\right\}  }$ of the other
players, if $P^{h}$ maximizes the payoff to $h$ over all his strategies,
conditional on the fixed $\left(  P^{i}\right)  _{i\in H\diagdown\left\{
h\right\}  }.$ The $H$-tuple of strategies $\mathbf{P}=\left(  P^{h}\right)
_{h\in H}$ is a \emph{Nash Equilibrium }(NE) if $P^{h}$ is a best reply to
$\left(  P^{i}\right)  _{i\in H\diagdown\left\{  h\right\}  }$ for every $h\in
H.$ Next, $P^{h}$ is a \emph{dominant strategy }of $h$ if $P^{h}$ is a best
reply to \emph{every} strategy-selection of the players in $H\diagdown\left\{
h\right\}  $. Finally $P^{h}$ is a \emph{strictly dominant strategy }of $h$ if
$P^{h}$ is the \emph{unique} best reply to every strategy-selection by the
players in $H\diagdown\left\{  h\right\}  $. It is obvious that if $P^{h}$ is
a strictly dominant strategy for every $h\in H$, then $\mathbf{P}=\left(
P^{h}\right)  _{h\in H}$ is the unique NE of the game. It is also obvious that
if each $F_{k}^{h}$ is assumed to be \emph{strictly }increasing, then the
coarse partition is \emph{strictly }dominant for every player (This follows
from the fact that $\pi^{\prime}-\pi$ is strictly positive in the proof of
Proposition \ref{Key Proposition}). In this case the coarse partitions
constitute the \emph{unique} Nash Equilibrium of the game.

\subsection{\textbf{\textit{Ex Post} Optimality\label{ex post optimality}}}

Our proof of Proposition \ref{thm: dominant} shows that the coarse partition
not only maximizes the expectation of $\prod\nolimits_{k\in K}F_{k}^{h}\left(
S_{k}\right)  $ for player $h$, but in fact maximizes his expected payoff
conditional on \emph{every realization} of the success tuples in
$\widetilde{\mathcal{H}}^{K}$ due to the other players (where
$\widetilde{\mathcal{H}}$ denotes the collection of subsets of $H\diagdown
\left\{  h\right\}  $). In other words, the coarse strategy is not just
dominant in the standard sense (\textit{ex ante} optimal), but in a much
stronger sense (\textit{ex post} optimal).

\subsection{\textbf{Incentive Compatibility\label{incentive compatibility}}}

Proposition \ref{thm: dominant} considerably bolsters the economic
plausibility of our examples. To this end, consider Example 4. It is not easy
for the entrepreneur to \emph{implement }the optimal contract with her agents.
She would need to monitor their behavior and institute costly punishment for
anyone who breaks the contract, i.e., takes it into his head to choose a
partition other than the coarsest. Moreover she would need to make it credible
to her agents that the punishment will be forthcoming so that it acts as a
deterrent. All this is not achieveable without considerable effort and
cost to herself, if it is achievable at all. However, our game-theoretic
approach provides a way out of this impasse. The entrepreneur can simply
announce that she will part with a (tiny!) fraction $\kappa^{h}$ of her profit
$\prod\nolimits_{k\in K}F_{k}\left(  S_{k}\right)  $ to each supplier $h.$
This engenders a game among the suppliers with payoffs $F^{h}\left(
\mathbf{S}\right)  =\kappa^{h}\prod\nolimits_{k\in K}F_{k}\left(
S_{k}\right)  $ to $h\in H$, {\it i.e.,} a game with \emph{common }payoffs (up to scalar
multiplication). In this game it is a dominant strategy for each $h$ to choose
his coarsest partition by Proposition \ref{thm: dominant}. Thus the optimal
contract of the principal is implemented by her agents of their own accord, at
virtually no cost to her!

\end{document}